\newtheorem{thm}{Theorem}
\newtheorem{defn}{Definition}
\newtheorem{lemma}{Lemma}
\newtheorem{pro}{Proposition}
\newtheorem{rk}{Remark}
\newtheorem{cor}{Corollary}
\newtheorem{ex}{Example}
\numberwithin{equation}{section} \setcounter{tocdepth}{1}
\newcommand{\A}{{\mathcal A}}
\newcommand{\B}{{\mathcal B}}
\newcommand{\bea}{\begin{eqnarray}}
\newcommand{\eea}{\end{eqnarray}}
\newcommand{\R}{\mathbb{R}}
\newcommand{\C}{\mathbb{C}}
\def\R{\mathbb{R}}
\def\C{\mathcal{C}}
\DeclareMathOperator{\rank}{rank}
\DeclareMathOperator{\Hom}{Hom}
\begin{document}

\title[Evolution algebra of a ``chicken'' population]{Evolution algebra of a ``chicken'' population}

\author{M. Ladra,  U. A. Rozikov}

 \address{M.~Ladra\\Departamento de \'Algebra, Universidad de Santiago de Compostela\\15782
Santiago de Compostela, Spain}
\email {manuel.ladra@usc.es}

 \address{U.\ A.\ Rozikov\\ Institute of mathematics,
29, Do'rmon Yo'li str., 100125, Tashkent, Uzbekistan.}
\email {rozikovu@yandex.ru}

\begin{abstract} We consider an evolution algebra which corresponds to a
bisexual population with a set of
females partitioned into finitely many different types
and the males having only one type. We study basic properties of the algebra.
This algebra is commutative (and hence flexible), not
associative and not necessarily power associative, in general. Moreover it is not unital. A condition is found 
on the structural constants of the algebra under which the algebra is associative,
alternative, power associative, nilpotent, satisfies Jacobi and Jordan identities. In a general case, we
describe the full set of idempotent elements and the full set of
absolute nilpotent elements. The set of all operators of left (right) multiplications is described. Under some conditions on the structural constants it is proved that the corresponding algebra is centroidal.  Moreover the classification of 2-dimensional and some 3-dimensional algebras are obtained.
\end{abstract}
\maketitle

{\bf{Key words.}}
Evolution algebra; bisexual population; associative algebra; centroidal; idempotent; nilpotent;  unital.

{\bf Mathematics Subject Classifications (2010).} 17D92; 17D99; 60J10.

\section{Introduction} \label{sec:intro}

Description of a sex linked inheritance
with algebras involves overcoming the obstacle of asymmetry in the genetic
inheritance rules. Inheritance which is not sex linked is symmetrical with respect
to the sexes of the organisms \cite{ly}, while sex linked inheritance is not (see \cite{LR},\cite{m}).
The main problem for a given algebra of a sex linked population is to carefully examine how
the basic algebraic model must be
altered in order to compensate for this lack of symmetry in the genetic inheritance
system. In \cite{e3}  Etherington began the study of this kind of algebras with the
simplest possible case.

Now the methods of mathematical genetics have become probability theory, stochastic processes, nonlinear differential and difference equations and non-associative algebras.  The book \cite{ly} describes some mathematical apparats of studying  algebras of genetics.  This book mainly considers a {\it free population}, which means random mating in the population.
Evolution of a free population can be given by a dynamical system generated by a quadratic stochastic
operator (QSO) and by an evolution algebra of a free population.
In \cite{ly} an evolution algebra associated to the
free population is introduced and using this non-associative algebra,
many results are obtained in explicit form, e.g., the explicit
description of stationary quadratic operators, and the explicit
solutions of a nonlinear evolutionary equation in the absence of
selection, as well as general theorems on convergence to equilibrium
in the presence of selection. In \cite{GMR} some recently obtained
results and also several open problems related
to the theory of QSOs are discussed. See also \cite{ly} for more detailed theory of QSOs.

Recently in \cite{LR} an evolution algebra $\B$ is introduced identifying the coefficients
of inheritance of a bisexual population as the structure constants
of the algebra.  The basic properties of the algebra are studied.
Moreover a detailed analysis of a special case
of the evolution algebra (of bisexual population in which type ``1''
of females and males have preference) is given. Since the structural constants of the algebra $\B$ are given by two cubic matrices, the study of this algebra is difficult. To avoid such difficulties one has to consider an algebra of bisexual population with a simplified form of matrices of structural constants.
In this paper we consider a such simplified model of bisexual population and study corresponding evolution algebra.

The paper is organized as follows. In Section 2 we define our algebra as an evolution algebra which corresponds to a
bisexual population with a set of
females partitioned into finitely many different types
and the males having only one type. Then we study basic properties (associativity, non-associativity, commutativity, power associativity, nilpotency, unitality,    etc.) of the algebra. Section 3 is devoted to subalgebras, absolute nilpotent elements and idempotent elements of the algebra. In Section 4 the set of all operators of left (right) multiplications is described. In Section 5, under some conditions on the structural constants, it is proved that the corresponding algebra is centroidal. The last section gives a classification of 2-dimensional and some 3-dimensional algebras.

\section{Definition and basic properties of the EACP}

We consider a set  $\{h_i, i=1,\dots,n\}$ (the set of ``hen"s) and $r$ (a ``rooster").

\begin{defn} Let $(\mathcal C, \cdot)$ be an algebra over a field $K$ (with characteristic $\ne 2$). If it admits a basis $\{h_1,\dots,h_n, r\}$, such that
\begin{equation}\label{4}
\begin{array}{ll}
h_ir = rh_i=\frac{1}{2} \left(\sum_{j=1}^na_{ij}h_j+ b_ir\right),  \\
h_ih_j =0,\ \ i,j=1,\dots,n; \ \ rr =0, \end{array} \end{equation}
then this algebra is called an evolution algebra of a ``chicken" population (EACP).
We call the basis $\{h_1,\dots,h_n, r\}$ a natural basis.
\end{defn}

\begin{rk} If
\begin{equation}\label{xh}
\sum_{j=1}^na_{ij}=1; \ \ b_i=1; \ \ \mbox{for all} \ \ i=1,2,\dots,n
 \end{equation}
 then the corresponding $\mathcal C$ is a particular case of an evolution algebra of a bisexual population, $\mathcal B$, introduced in \cite{LR}. The study of the algebra $\mathcal B$ is difficult, since it is determined by two cubic matrices. While the algebra $\mathcal C$ is more simpler, since it is defined by a rectangular $n\times (n+1)$-matrix
$$M=\left(\begin{array}{ccccc}
a_{11}&a_{12}&\dots&a_{1n}&b_1\\[2mm]
a_{21}&a_{22}&\dots&a_{2n}&b_2\\[2mm]
\vdots&\vdots&\vdots&\vdots&\vdots\\[2mm]
a_{n1}&a_{n2}&\dots&a_{nn}&b_n
\end{array}\right),$$
which is called the matrix of structural constants of the algebra $\mathcal C$.
This simplicity allows to obtain deeper results on $\mathcal C$ than on $\mathcal B$. Moreover, in this paper we do not require the condition (\ref{xh}).
\end{rk}

The general formula for the multiplication is the extension of
(\ref{4}) by bilinearity, i.e. for $x,y\in \C$,
$$ x=\sum_{i=1}^nx_ih_i+ur, \ \  y=\sum_{i=1}^ny_ih_i+vr$$
using (\ref{4}), we obtain
\begin{equation}\label{xy}
xy={1\over 2}\sum_{j=1}^n\left(\sum_{i=1}^n(vx_i+uy_i)a_{ij}\right)h_j+{1\over 2}\left(\sum_{i=1}^n(vx_i+uy_i)b_i\right)r
\end{equation}
and
\begin{equation}\label{xx}
x^2=xx=\sum_{j=1}^n\left(\sum_{i=1}^n(ux_i)a_{ij}\right)h_j+\left(\sum_{i=1}^n(ux_i)b_i\right)r.
\end{equation}

We recall the following definitions:
If $x, y$ and $z$ denote arbitrary elements of an algebra then

Associative: $(xy)z = x(yz)$.

Commutative: $xy = yx$.

Anticommutative: $xy = -yx$.

Jacobi identity: $(xy)z + (yz)x + (zx)y = 0$.

Jordan identity: $(xy)x^2 = x(yx^2)$.

Power associative: For all $x$, any three nonnegative powers of $x$ associate. That is if $a, b$ and $c$ are nonnegative powers of $x$, then $a(bc) = (ab)c$. This is equivalent to saying that $x^m x^n = x^{n+m}$ for all nonnegative integers $m$ and $n$.

Alternative: $(xx)y = x(xy)$ and $(yx)x = y(xx)$.

Flexible: $x(yx) = (xy)x$.

It is known that these properties are related by
\begin{itemize}
\item {\it associative} implies {\it alternative} implies {\it power associative};

\item {\it associative} implies {\it Jordan identity} implies {\it power associative};

\item Each of the properties associative, commutative, anticommutative, Jordan identity, and Jacobi identity individually imply flexible.
\end{itemize}
For a field with characteristic not two, being both commutative and anticommutative implies the algebra is just $\{0\}$.

By \cite[Theorem 4.1.]{LR} we have
\begin{itemize}
  \item[(1)] Algebra $\mathcal C$ is not associative, in
general.
  \item[(2)] Algebra $\mathcal C$ is commutative, flexible.
  \item[(3)] $\mathcal C$ is not power-associative, in general.
\end{itemize}

Now we shall give conditions on the matrix $M$ under which $\C$ will be associative.

\begin{thm} The algebra $\C$ is associative iff the elements of the corresponding matrix $M$ satisfy the following
\begin{equation}\label{ac}
\sum_{j=1}^na_{ij}a_{jk}=0, \ \ b_i=0, \ \ \mbox{for any} \ \ i,k=1,2,\dots,n.
\end{equation}
\end{thm}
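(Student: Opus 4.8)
The plan is to reduce associativity to the vanishing of the associator $(x,y,z):=(xy)z-x(yz)$, which is trilinear, so it suffices to check it on all triples of basis elements drawn from $\{h_1,\dots,h_n,r\}$. I would organize the verification according to how many of the three arguments equal $r$, using throughout the defining relations $h_ih_j=0$, $rr=0$, and $h_ir=rh_i=\frac12(\sum_j a_{ij}h_j+b_ir)$, together with the already-established commutativity. The trivial cases come first: if all three arguments are hens then both $(h_ih_j)h_k$ and $h_i(h_jh_k)$ vanish, since any product of two hens is zero, and the triple $(r,r,r)$ gives $0$ because $rr=0$. These contribute no conditions.

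Next I would treat the triples containing exactly one $r$. A short computation gives
\[
(r,h_i,h_j)=\tfrac14 b_i\Big(\sum_l a_{jl}h_l+b_jr\Big),\qquad
(h_i,h_j,r)=-\tfrac14 b_j\Big(\sum_k a_{ik}h_k+b_ir\Big),
\]
while the middle position $(h_i,r,h_j)$ yields $\tfrac14\sum_l(b_ia_{jl}-b_ja_{il})h_l$. The crucial observation is that the $r$-component of $(r,h_i,h_i)$ equals $\tfrac14 b_i^2$; since $K$ is a field, $b_i^2=0$ forces $b_i=0$ for every $i$. Once $b_i=0$ for all $i$, all three of these associators vanish identically, so the only surviving constraint from this block is exactly $b_i=0$.

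Finally I would handle the triples with exactly two $r$'s. Here $(r,h_i,r)$ vanishes automatically by commutativity (both bracketings reduce to $\tfrac12\sum_j a_{ij}h_jr$), while the outer cases give
\[
(r,r,h_i)=-\tfrac14\Big(\sum_k\big(\textstyle\sum_j a_{ij}a_{jk}\big)h_k+\big(\textstyle\sum_j a_{ij}b_j\big)r\Big),
\]
and $(h_i,r,r)$ is the negative of this. With $b_i=0$ already forced, the $r$-component disappears and the $h_k$-component vanishes precisely when $\sum_j a_{ij}a_{jk}=0$ for all $i,k$. This establishes necessity. For sufficiency I would simply substitute the conditions (\ref{ac}) back into the formulas above: every listed associator is then zero, so $(x,y,z)=0$ on all basis triples and hence, by trilinearity, for all $x,y,z\in\C$.

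The main obstacle is purely organizational: keeping the essentially distinct basis triples straight and extracting the minimal set of conditions without redundancy. The two genuinely decisive steps are the reduction $b_i^2=0\Rightarrow b_i=0$ (which uses only that $K$ is a field, and notably does not require characteristic $\ne 2$), and the appearance of the matrix square $\sum_j a_{ij}a_{jk}$ in the double-$r$ cases, which is what produces the condition (\ref{ac}).
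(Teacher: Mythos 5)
Your basis-triple computations are correct, and the argument matches the paper's: necessity is obtained by evaluating the associator on basis elements (the paper gets $b_ib_j=0$ where you get $b_i^2=0$, and both extract $\sum_{j}a_{ij}a_{jk}=0$ from the triples with two copies of $r$), while your sufficiency via trilinearity of the associator is equivalent to the paper's Lemma showing $xyz=0$ for all $x,y,z\in\C$. This is essentially the same approach as the paper's proof, and it is correct.
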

\begin{proof} {\sl Necessity.} Assume the algebra $\C$ is associative. First we consider the equality $(xy)z=x(yz)$ for basis elements.
If $x,y,z\in \{h_1,\dots,h_n\}$ or $x=y=z=r$ then the equality is obvious.

From $(h_ir)h_j=h_i(rh_j)$ we get
$$a_{jk}b_i=a_{ik}b_j, \ \ \mbox{for all} \ \ i,j,k=1,\dots, n.$$

The equality   $(h_ih_j)r=h_i(h_jr)$ gives
$$a_{ik}b_j=0, \ \ b_ib_j=0\ \ \mbox{for all} \ \ i,j,k=1,\dots, n.$$

From $(rh_i)r=r(h_ir)=(h_ir)r=h_i(rr)=0$ we obtain
$$\sum_{j=1}^na_{ij}a_{jk}=0, \ \ \mbox{for all} \ \ i,k=1,\dots,n.$$
These conditions imply the condition (\ref{ac}).

{\sl Sufficiency.} Assume the condition (\ref{ac}) is satisfied, then the following lemma shows that the algebra $\C$ is associative.
\end{proof}
\begin{lemma}\label{3l} If the condition (\ref{ac}) is satisfied then
\begin{equation}\label{c0}
xyz=0, \ \ \mbox{for all} \ \ x,y,z\in \C.
\end{equation}
\end{lemma}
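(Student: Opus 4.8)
The plan is to reduce the claim to natural-basis elements and then isolate the single nontrivial case. Since multiplication is bilinear, each of the two bracketings $(xy)z$ and $x(yz)$ is a trilinear function of $(x,y,z)$; hence it suffices to prove that \emph{every} product of three natural-basis elements vanishes in \emph{both} bracketings. Once that is done, expanding arbitrary $x,y,z\in\C$ in the natural basis and using trilinearity yields $xyz=0$ for all $x,y,z$, where the symbol $xyz$ is read as ``every bracketing of the triple product.''

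First I would record what (\ref{4}) becomes under (\ref{ac}). Since $b_i=0$ for all $i$, the defining relations reduce to $h_ir=rh_i=\tfrac12\sum_{j=1}^n a_{ij}h_j$, together with $h_ih_j=0$ and $rr=0$. The decisive structural observation is that the only products of two basis elements that are not already zero are $h_ir=rh_i$, and these lie in the subspace $H=\spa\{h_1,\dots,h_n\}$ (no $r$-component survives, precisely because $b_i=0$).

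Next I would run through the bracketings $(e_ae_b)e_c$ with $e_a,e_b,e_c\in\{h_1,\dots,h_n,r\}$. If $e_ae_b=0$ the product is zero, so we may assume $\{e_a,e_b\}=\{h_i,r\}$, whence $e_ae_b=\tfrac12\sum_j a_{ij}h_j\in H$. Because $H\cdot H=0$ (as $h_jh_l=0$), a nonzero outcome forces $e_c=r$. Thus the only bracketed triple that can fail to vanish is $(h_ir)r$ (equivalently $(rh_i)r$), and here
$$(h_ir)r=\tfrac12\sum_{j=1}^n a_{ij}(h_jr)=\tfrac14\sum_{k=1}^n\Big(\sum_{j=1}^n a_{ij}a_{jk}\Big)h_k=0$$
exactly by condition (\ref{ac}). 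The mirror analysis for $e_a(e_be_c)$ leaves the single surviving case $r(h_ir)$, which equals the same double sum and again vanishes by (\ref{ac}).

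The only genuine obstacle here is conceptual rather than computational: because $\C$ is not associative in general, one must check both bracketings, so the reduction to basis elements has to cover $(e_ae_b)e_c$ and $e_a(e_be_c)$ separately. After that reduction the whole argument collapses to the single identity $\sum_j a_{ij}a_{jk}=0$, i.e. $A^2=0$ for the matrix $A=(a_{ij})$, which is precisely what (\ref{ac}) supplies.
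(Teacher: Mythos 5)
Your proof is correct and rests on exactly the same mechanism as the paper's: under (\ref{ac}) every product of two elements lies in $\mathrm{span}\{h_1,\dots,h_n\}$ (the $r$-component dies because $b_i=0$), and one further multiplication produces the sums $\sum_j a_{ij}a_{jk}$, which vanish by hypothesis. The only difference is presentational --- you reduce to basis elements by trilinearity while the paper computes directly with the general coordinate formula (\ref{xy0}) --- and your explicit remark that both bracketings must be checked is handled in the paper implicitly via commutativity.
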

\begin{proof} Under condition (\ref{ac}) from (\ref{xy}) we get
\begin{equation}\label{xy0}
xy={1\over 2}\sum_{j=1}^n\left(\sum_{i=1}^n(vx_i+uy_i)a_{ij}\right)h_j.
\end{equation}
Consequently, for $z=\sum_{m=1}^nz_mh_m+wr$, using condition (\ref{ac}) we get
 $$
(xy)z={1\over 2}\sum_{j=1}^nw\left(\sum_{i=1}^n(vx_i+uy_i)a_{ij}\right)(h_jr)=$$
$${1\over 4}\sum_{m=1}^n\left(w\sum_{j=1}^n\left(\sum_{i=1}^n(vx_i+uy_i)a_{ij}\right)a_{jm}\right)h_m=$$
$${1\over 4}\sum_{m=1}^n\left(\sum_{i=1}^n\left[w(vx_i+uy_i)\left(\sum_{j=1}^na_{ij}a_{jm}\right)\right]\right)h_m=0.$$
\end{proof}
By this lemma and above mentioned properties we get
\begin{cor} If the condition (\ref{ac}) is satisfied then
algebra $\C$ is alternative, power associative, satisfies Jacobi and Jordan identities.
\end{cor}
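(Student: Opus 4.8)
The plan is to reduce every one of the four identities to the single fact supplied by Lemma \ref{3l}, namely that all triple products vanish under condition (\ref{ac}). This observation trivializes most of the claims, because each property is an equation whose two sides (or whose three summands) are themselves products of three elements, and a product that is identically zero satisfies any such identity automatically. I would therefore treat the lemma as the engine and check case by case that each required product is a triple product.

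First I would dispose of the Jacobi identity, since this is the one property not furnished by associativity. Each of the three summands $(xy)z$, $(yz)x$, $(zx)y$ is a triple product, so by Lemma \ref{3l} each equals $0$; hence their sum is $0$ and the Jacobi identity holds. For alternativity I would note that both $(xx)y$ and $x(xy)$ are triple products, hence both vanish, so $(xx)y = 0 = x(xy)$, and the companion identity $(yx)x = y(xx)$ is handled identically. Alternatively, since the Theorem already gives that $\C$ is associative under (\ref{ac}), alternativity follows at once from the implication ``associative implies alternative'' recorded before the Theorem; I would mention this as a second, non-self-contained route.

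For the Jordan identity $(xy)x^2 = x(yx^2)$ the only care needed is that $x^2 = xx$ is itself a compound factor. On the left, both $xy$ and $x^2$ lie in $\spa\{h_1,\dots,h_n\}$ by (\ref{xy0}) and by (\ref{xx}) with $b_i=0$, and any product of two elements of $\spa\{h_1,\dots,h_n\}$ is $0$ because $h_ih_j=0$; hence $(xy)x^2=0$. On the right, $yx^2 = y(xx)$ is a genuine triple product and so equals $0$ by the lemma, whence $x(yx^2) = x\cdot 0 = 0$. Thus both sides vanish. Power associativity follows in the same spirit: $x^3 = x\cdot x^2$ is a triple product, so $x^m = 0$ for all $m\ge 3$, which forces $x^m x^n = x^{m+n}$ in every case; equivalently it is inherited from ``associative implies power associative.''

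The steps are all routine once the lemma is in hand, so there is no genuine obstacle; the only point demanding attention is the Jordan identity, where one must remember that $x^2$ is a product and rewrite the expressions as honest products of elements of $\spa\{h_1,\dots,h_n\}$ (or as triple products, invoking associativity) before applying Lemma \ref{3l}. I expect this bookkeeping about compound factors to be the single place where a careless reading could stall.
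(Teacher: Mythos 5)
Your proposal is correct and follows essentially the same route as the paper, which likewise disposes of the corollary in one line by combining Lemma \ref{3l} (all triple products vanish under (\ref{ac})) with the previously listed implications from associativity. Your extra care with the Jordan identity and with $x^2$ as a compound factor is sound but not a departure in method.
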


We note that the conditions (\ref{xh}) and (\ref{ac}) cannot be satisfied simultaneously,
so the corresponding algebra $\mathcal B$ of a bisexual population is not associative.

\begin{ex} The following matrix  $M$ (for $n=2$) satisfies  the condition (\ref{ac}):
$$M=\left(\begin{array}{ccc}
a&b&0\\
c&-a&0
\end{array}
\right),
$$
for any $a,b,c$ with $a^2=-bc$.
\end{ex}

\begin{defn} An element $x$ of an algebra $\mathcal A$  is called nil if
there exists $n(a) \in \mathbb{N}$ such that $(\cdots
\underbrace{((x\cdot x)\cdot x)\cdots x}_{n(a)})=0$.
The algebra $\mathcal A$ is called nil if every  element of the algebra
is nil.
\end{defn}

For $k\geq 1$, we introduce the following sequences:
$$\mathcal A^{(1)} =\mathcal A^, \ \ \mathcal A^{(k+1)} = \mathcal A^{(k)}\mathcal A^{(k)}.$$
$$\mathcal A^{<1>} = \mathcal A, \ \ \mathcal A^{<k+1>} = \mathcal A^{<k>}\mathcal A.$$
$$\mathcal A^1 = \mathcal A, \ \ \mathcal A^k =\sum_{i=1}^{k-1}\mathcal A^i\mathcal A^{k-i}.$$

\begin{defn} An algebra $\mathcal A$ is called
\begin{itemize}
\item[(i)] solvable if there exists $n\in \mathbb{N}$ such that $\mathcal A^{(n)} = 0$ and the minimal such number is
called index of solvability;
\item[(ii)] right nilpotent if there exists $n\in \mathbb{N}$ such that $\mathcal A^{<n>} = 0$ and the minimal such
number is called index of right nilpotency;
\item[(iii)] nilpotent if there exists $n\in \mathbb{N}$ such that $\mathcal A^n = 0$ and the minimal such number
is called index of nilpotency.
\end{itemize}
\end{defn}

We note that for an EACP notions as nil, nilpotent and right nilpotent algebras
are equivalent. However, the indexes of nility, right
nilpotency and nilpotency do not coincide in general.

The following is also a corollary of Lemma \ref{3l}.

\begin{cor} If the condition (\ref{ac}) is satisfied then
algebra $\C$ is nilpotent with nilpotency index equal 3.
\end{cor}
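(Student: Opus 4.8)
The plan is to read this off directly from Lemma \ref{3l}. First I would unwind the definition of the series $\C^k$: by definition $\C^3 = \sum_{i=1}^{2}\C^i\C^{3-i} = \C^1\C^2 + \C^2\C^1 = \C(\C\C) + (\C\C)\C$. Every element of $\C(\C\C)$ is a linear combination of products of the form $x(yz)$, and every element of $(\C\C)\C$ is a linear combination of products $(xy)z$, with $x,y,z \in \C$. Lemma \ref{3l} asserts that all such triple products vanish, so both summands are $0$ and therefore $\C^3 = 0$. This already shows that $\C$ is nilpotent with index at most $3$.

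It then remains to verify sharpness, namely $\C^2 \neq 0$, which rules out index $1$ or $2$. For this I would invoke the reduced multiplication formula (\ref{xy0}), valid under (\ref{ac}): since $b_i = 0$ we have $h_i r = \frac{1}{2}\sum_{j=1}^n a_{ij} h_j$, while $h_i h_j = rr = 0$. The key observation is that condition (\ref{ac}) only forces $A^2 = 0$ for the matrix $A = (a_{ij})$, and not $A = 0$; a nonzero nilpotent $A$ is perfectly admissible, as the Example above with $a^2 = -bc$ illustrates. Hence, choosing any entry $a_{ij}\neq 0$, the product $h_i r = \frac{1}{2}\sum_{k=1}^n a_{ik} h_k$ is a nonzero element of $\C^2$, so $\C^2 \neq 0$ and the nilpotency index is exactly $3$.

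There is no genuine obstacle in the first step, as it is a mechanical consequence of Lemma \ref{3l}. The only point requiring a little care is the exactness claim: one must note that (\ref{ac}) constrains only the square of $A$, so the multiplication of $\C$ is nontrivial in general and the index is $3$ rather than collapsing to $2$. (In the degenerate case $A = 0$ the product is identically zero and the index would be $2$; the statement is thus understood for the nondegenerate case $A \neq 0$, which is the content of the condition together with a nonzero matrix of structural constants.)
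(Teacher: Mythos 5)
Your proposal is correct and follows the same route the paper intends: the paper gives no written argument beyond declaring the statement ``a corollary of Lemma \ref{3l},'' and your first paragraph is exactly the intended unwinding of $\C^3=\C(\C\C)+(\C\C)\C=0$ from the vanishing of all triple products. Your second paragraph actually goes beyond the paper, which never checks that the index is exactly $3$ (i.e.\ that $\C^2\neq 0$); you rightly note that (\ref{ac}) forces only ${\bf A}^2=0$ rather than ${\bf A}=0$, so the stated index is correct whenever ${\bf A}\neq 0$ but degenerates to $2$ in the abelian case ${\bf A}=0$ --- a caveat the paper silently omits.
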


Recall that an algebra is unital or unitary if it has an
element $e$ with $ex = x = xe$ for all $x$ in the algebra.

\begin{pro} The algebra $\C$ is not unital.
\end{pro}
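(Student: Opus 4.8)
The plan is to argue by contradiction: suppose a unit $e$ exists and exploit only the left identity property $ex=x$, which any unit must satisfy for every $x\in\C$. Testing this identity on the basis elements $h_k$ and $r$ will pin down the coordinates of $e$ and force an inconsistency. Write $e=\sum_{i=1}^n e_ih_i+ur$ with $e_i,u\in K$, and recall from (\ref{4}) that $h_ih_k=0$, $rr=0$, and $rh_k=h_kr=\frac12\bigl(\sum_{j=1}^n a_{kj}h_j+b_kr\bigr)$.

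First I would compute $eh_k$ directly from (\ref{4}). Since $h_ih_k=0$ for all $i$, only the $r$-part of $e$ contributes, giving $eh_k=\frac{u}{2}\bigl(\sum_{j=1}^n a_{kj}h_j+b_kr\bigr)$. Comparing this with $h_k$ coefficient by coefficient, the $h_k$-coefficient yields $\frac{u}{2}a_{kk}=1$, i.e. $u\,a_{kk}=2$, so in particular $u\neq0$; the $r$-coefficient yields $u\,b_k=0$, and since $u\neq0$ this forces $b_k=0$. As $k$ is arbitrary, I obtain $b_k=0$ for all $k=1,\dots,n$.

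Next I would compute $er$ from (\ref{4}). Because $rr=0$, only the $h$-part of $e$ contributes, giving $er=\frac12\sum_{i=1}^n e_i\bigl(\sum_{j=1}^n a_{ij}h_j+b_ir\bigr)$, whose $r$-coefficient is $\frac12\sum_{i=1}^n e_ib_i$. The identity $er=r$ demands that this coefficient equal $1$, whereas the previous paragraph shows it equals $0$. This contradiction establishes that no unit can exist.

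The whole argument is a short direct computation, so there is no serious obstacle; the only point requiring care is the order of deductions. One must first extract $u\neq0$ from the diagonal equation $u\,a_{kk}=2$ (note that if some $a_{kk}=0$ this equation already collapses to $0=2$, an immediate contradiction) and only then conclude $b_k=0$ from $u\,b_k=0$. With this observation the argument closes uniformly in the structural constants, needing no further case analysis on the matrix $M$.
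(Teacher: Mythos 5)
Your proof is correct and follows essentially the same route as the paper's: test $eh_k=h_k$ to force $u\neq 0$ (from the diagonal coefficient) and hence $b_k=0$ for all $k$, then observe that the $r$-coefficient of $er$ must be $\tfrac12\sum_i e_ib_i=1$, which is impossible. Your version is in fact slightly more careful than the paper's, since you keep the factor $\tfrac12$ and make explicit the order of deductions.
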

\begin{proof}
Assume $e=\sum_{i=1}^n a_ih_i+br$ be a unity element.
We then have $eh_i=h_i$ which gives
\begin{equation}\label{e1}
ba_{jj}=1; \ \ ba_{jm}=0,\, m\ne j; \ \ bb_j=0, \ \ \mbox{for any} \ \ j=1,\dots,n.
\end{equation}
From $er=r$ we get
\begin{equation}\label{e2}
\sum_{i=1}^na_ia_{ij}=0, \ \ \mbox{for any} \ \ j=1,\dots,n; \ \  \sum_{i=1}^na_ib_i=1.
\end{equation}
From system (\ref{e1}) we get $b\ne 0$ and $b_i=0$ for all $i$. But for this $b_i$ the second equation of the system (\ref{e2}) is not satisfied. This completes the proof.
\end{proof}

An algebra $\mathcal A$ is a division algebra if for every $a,b\in \mathcal A$ with $a\ne 0$ the equations $ax=b$ and $xa=b$ are solvable in $\mathcal A$.
\begin{pro} The algebra $\C$ is not a division algebra.
\end{pro}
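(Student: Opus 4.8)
The plan is to read off the failure directly from the defining relations, using only that $\C$ is finite-dimensional. To show $\C$ is \emph{not} a division algebra it suffices, by the definition above, to exhibit a single nonzero $a\in\C$ and a single $b\in\C$ for which the equation $ax=b$ has no solution; since $\C$ is commutative (property (2)) we then also get $xa=ax\neq b$ for all $x$, so both required equations fail at once.

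The natural candidate is $a=r$. I would first note that $r$ is a basis vector, hence $r\neq 0$, while the defining relations (\ref{4}) give $r\cdot r=rr=0$. Introducing the left-multiplication operator $L_r\colon \C\to\C$, $L_r(x)=rx$, which is $K$-linear by bilinearity of the product, this says $L_r(r)=0$ with $r\neq 0$, so $\ker L_r\neq\{0\}$. The key step is then a dimension count: $\C$ is finite-dimensional with $\dim_K\C=n+1\geq 2$, and for a linear endomorphism of a finite-dimensional vector space injectivity is equivalent to surjectivity (by rank–nullity, valid over any field). Since $L_r$ is not injective, it is not surjective, so $\operatorname{Im}L_r$ is a proper subspace of $\C$. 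Explicitly, formula (\ref{xy}) shows $\operatorname{Im}L_r=\operatorname{span}\{rh_1,\dots,rh_n\}$, of dimension at most $n<n+1$. Choosing any $b\in\C\setminus\operatorname{Im}L_r$ yields an equation $rx=b$ with no solution, and by commutativity $xr=b$ is unsolvable as well.

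There is no genuine obstacle in this argument; the only points needing care are to record that $r$ is truly nonzero (the natural basis has $n+1\geq 2$ elements) and to invoke finite-dimensionality so that the nontrivial kernel forces non-surjectivity. In particular the conclusion is uniform in the structural constants $a_{ij},b_i$ and uses no special hypothesis such as (\ref{ac}) or (\ref{xh}); the obstruction is simply the absolute nilpotent element $r$ with $r^2=0$.
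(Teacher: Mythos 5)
Your proof is correct, and it takes a genuinely more economical route than the paper's. The paper works with an arbitrary nonzero $a=\sum_i\alpha_ih_i+\alpha r$, writes out the linear system (\ref{da1}) for $ax=b$ in the unknowns $x_1,\dots,x_n,u$, observes that the last column of the coefficient matrix $\mathbf{M}$ is the linear combination $\sum_i\alpha_i$ times the $i$-th column of the preceding ones, concludes $\det\mathbf{M}=0$ and $\rank\mathbf{M}\le n$, and then invokes Kronecker--Capelli to pick a $b$ with $\rank(\mathbf{M}|B)=1+\rank\mathbf{M}$. You instead fix the single witness $a=r$ and read the failure off the relation $rr=0$: the operator $L_r$ has $r\neq0$ in its kernel, so by rank--nullity it cannot be surjective, and any $b\notin\Img L_r=\spa\{rh_1,\dots,rh_n\}$ does the job; commutativity disposes of $xa=b$. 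Both arguments are ultimately rank arguments about the left-multiplication operator, but yours avoids writing down the matrix entirely and isolates the structural reason for the failure (the absolute nilpotent $r$), at the cost of only exhibiting one bad $a$; the paper's computation shows the stronger fact that \emph{every} nonzero $a$ admits an unsolvable equation $ax=b$, which is more than the definition requires but is information in its own right. Your side remarks (that $r\neq0$ because it is a basis vector, that $\dim_K\C=n+1\ge2$, and that no hypothesis on the structural constants is needed) are all accurate.
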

\begin{proof}
Since $\C$ is a commutative algebra we shall check only $ax=b$. For coordinates of any $a=\sum_{i=1}^n\alpha_ih_i+\alpha r$, $b=\sum_{i=1}^n\beta_ih_i+\beta r$, $x=\sum_{i=1}^nx_ih_i+ur$ the equation $ax=b$ has the following form
\begin{equation}\label{da1}
\begin{array}{ll}
\left(\sum_{i=1}^na_{ij}\alpha_i\right)u+\alpha\sum_{i=1}^n a_{ij}x_i=2\beta_j, \ \ j=1,\dots,n,\\[3mm]
\left(\sum_{i=1}^nb_i\alpha_i\right)u+\alpha\sum_{i=1}^n b_ix_i=2\beta.
\end{array}
\end{equation}
So this is a linear system with $n+1$ unknowns $x_1,\dots,x_n, u$.    This system can be written as ${\bf M}y=B$ where
$y^T=(x_1,\dots,x_n,u)$, $B=2(\beta_1,\dots,\beta_n,\beta)$ and
$${\bf M}=\alpha^n\cdot\left(\begin{array}{ccccc}
a_{11}& a_{21}&\dots& a_{n1}& \sum_{i=1}^na_{i1}\alpha_i\\[2mm]
a_{12}& a_{22}&\dots& a_{n2}& \sum_{i=1}^na_{i2}\alpha_i\\[2mm]
\vdots&\vdots&\vdots&\vdots&\vdots\\[2mm]
a_{1n}& a_{2n}&\dots& a_{nn}& \sum_{i=1}^na_{in}\alpha_i\\[2mm]
b_1& b_2&\dots& b_n& \sum_{i=1}^nb_i\alpha_i
\end{array}\right).$$

By the very known Kronecker-Capelli theorem the system of linear equations ${\bf M}y=B$ has a solution if and only if the rank of matrix ${\bf M}$ is equal to the rank of its augmented matrix $({\bf M}|B)$.
Since the last column of the matrix ${\mathbf M}$ is a linear combination of the other columns of the matrix, we have
$\det({\bf M})=0$. Consequently $\rank {\mathbf M}\leq n$.  Moreover since dimension of the algebra $\C$ is $n+1$ one can choose $b$, i.e. the vector $B$ such that $\rank ({\mathbf M}|B)=1+\rank {\mathbf M}$. Then for such $b$ the equation $ax=b$ is not solvable. This completes the proof.
\end{proof}
\section{Evolution subalgebras and operator corresponding to $\C$}

By analogues of \cite[Definition 4, p. 23]{t} we give the following

\begin{defn}\begin{itemize}
\item[1)] Let $\C$ be an EACP, and $\C_1$ be a subspace of $\C$.
If $\C_1$ has a natural basis, $\{h_1',h_2',\dots,h_m',r'\}$, with multiplication table like (\ref{4}), we call $\C_1$ an evolution subalgebra of a CP.

\item[2)] Let $I\subset \C$ be an evolution subalgebra of a CP.
If $\C I\subseteq I$, we call $I$ an evolution ideal of a CP.

\item[3)] Let $\C$ and $\mathcal D$ be EACPs, we say a linear homomorphism $f$
from $\C$ to $\mathcal D$ is an evolution homomorphism, if $f$ is an algebraic map and for
a natural basis $\{h_1,\dots, h_n, r\}$ of $\C$, $\{f(r), f(h_i), i=1,\dots,n\}$ spans an evolution subalgebra of a CP
in $\mathcal D$. Furthermore, if an evolution homomorphism is one to one and onto, it
is an evolution isomorphism.

\item[4)] An EACP, $\C$ is simple if it has no proper evolution ideals.

\item[5)] $\C$ is irreducible if it has no proper subalgebras.
\end{itemize}
\end{defn}

The following proposition gives some evolution subalgebras of a CP.

\begin{pro} Let $\C$ be an EACP with the natural basis $\{h_1,\dots,h_n,r\}$ and matrix
$$M=\left(\begin{array}{cccccc}
a_{11}&0&0&\dots&0&b_1\\[2mm]
a_{21}&a_{22}&0&\dots&0&b_2\\[2mm]
\vdots&\vdots&\vdots&\vdots&\vdots&\vdots\\[2mm]
a_{n1}&a_{n2}&a_{n3}&\dots&a_{nn}&b_n
\end{array}\right).$$
Then for each $m$, $1\leq m\leq n$, the algebra $\C_m=\langle h_1,\dots,h_m,r\rangle\subset \C$ is an evolution subalgebra of a CP.
\end{pro}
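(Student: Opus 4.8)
The plan is to verify directly that $\C_m$ is closed under the multiplication of $\C$ and that the induced multiplication on $\C_m$, written in the basis $\{h_1,\dots,h_m,r\}$, has exactly the shape (\ref{4}). These two facts together are precisely what part 1) of the preceding definition requires for $\C_m$ to be an evolution subalgebra of a CP.

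First I would check closure on the spanning set $\{h_1,\dots,h_m,r\}$. By (\ref{4}) the products $h_ih_j$ (for $i,j\le m$) and $rr$ all vanish, so they lie in $\C_m$ trivially. The only products that could leave the subspace are the $h_ir$ with $i\le m$, and here the triangular shape of $M$ enters: since $a_{ij}=0$ whenever $j>i$, for $i\le m$ we have
$$h_ir=rh_i=\frac12\left(\sum_{j=1}^na_{ij}h_j+b_ir\right)=\frac12\left(\sum_{j=1}^ia_{ij}h_j+b_ir\right),$$
and as $i\le m$ every surviving index $j$ satisfies $j\le i\le m$. Hence $h_ir\in\langle h_1,\dots,h_m\rangle+\langle r\rangle=\C_m$. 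By bilinearity this shows $\C_m\cdot\C_m\subseteq\C_m$, so $\C_m$ is a subalgebra.

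It then remains to recognize $\{h_1,\dots,h_m,r\}$ as a natural basis. Setting $h_i'=h_i$ for $i\le m$ and $r'=r$, the displayed computation reads precisely
$$h_i'r'=r'h_i'=\frac12\left(\sum_{j=1}^ma_{ij}h_j'+b_ir'\right),\qquad h_i'h_j'=0,\qquad r'r'=0,$$
which is the multiplication table (\ref{4}) for an EACP of dimension $m+1$, whose matrix of structural constants is the leading principal $m\times m$ block of the $a$-part of $M$ together with the first $m$ entries $b_1,\dots,b_m$. Thus $\C_m$ carries the structure of an EACP with natural basis $\{h_1,\dots,h_m,r\}$, and is therefore an evolution subalgebra of a CP.

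As for difficulty, there is essentially no obstacle beyond observing that lower triangularity is exactly what keeps the vectors $h_ir$ inside $\C_m$. I would emphasize that the argument breaks without this hypothesis: for a general matrix $M$ a term $a_{ij}h_j$ with $j>m$ would appear in $h_ir$ and push the product outside $\langle h_1,\dots,h_m\rangle$. Consequently the nested family $\C_1\subset\C_2\subset\cdots\subset\C_n$ of subalgebras is a genuine consequence of the triangular normalization of $M$ and not a feature of arbitrary EACPs.
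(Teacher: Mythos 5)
Your proof is correct and follows essentially the same route as the paper, which simply asserts closure under multiplication and that the chosen subset of the natural basis satisfies (\ref{4}); you have merely spelled out the role of the lower-triangular condition $a_{ij}=0$ for $j>i$ in keeping $h_ir$ inside $\langle h_1,\dots,h_m,r\rangle$. No gaps.
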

\begin{proof} For given $M$ it is easy to see that $\C_m$ is closed under multiplication. The chosen subset of the natural
basis of $\C$ satisfies (\ref{4}).

\end{proof}

The following is an example of a subalgebra
of $\C$, which is not an evolution subalgebra of a CP.

\begin{ex} Let $\C$ be EACP with basis $\{h_1,h_2,h_3,r\}$ and multiplication
defined by $h_ir=h_i+r$, $i=1,2,3$. Take $u_1=h_1+r$, $u_2=h_2+r$. Then
$$(au_1+bu_2)(cu_1+du_2)=acu_1^2+(ad+bc)u_1u_2+bdu_2^2=(2ac+ad+bc)u_1+(2bd+ad+bc)u_2.$$
Hence, $F=Ku_1+Ku_2$ is a subalgebra of $\C$, but it is not an evolution subalgebra of a CP. Indeed,
assume $v_1, v_2$ be a basis of $F$. Then $v_1=au_1+bu_2$ and $v_2=cu_1+du_2$ for some $a,b,c,d\in K$ such that
$D=ad-bc\ne 0$. We have $v_1^2=(2a^2+2ab)u_1+(2b^2+2ab)u_2$ and $v_2^2=(2c^2+2cd)u_1+(2d^2+2cd)u_2$. We must have $v_1^2=v_2^2=0$, i.e.
$$ a^2+ab=0, \ \ b^2+ab=0, \ \ c^2+cd=0, \ \ d^2+cd=0.$$

From this we get $a=-b$ and $c=-d$. Then $D=0$, a contradiction. If $a=0$ then $b=0$ (resp. $c=0$ then $d=0$), we reach the same contradiction. Hence $v_1^2\ne 0$ and $v_2^2\ne 0$, and consequently $F$ is not an evolution subalgebra of a CP.
\end{ex}

Let $\C$ be an EACP on the field $K=\R$, with a basis set $\{h_1,\dots,h_n, r\}$ and $x=\sum_{i=1}^nx_ih_i+ur\in \C$. Formula (\ref{xx}) can be written as
\begin{equation}\label{xxu}
x^2=V(x)=\sum_{j=1}^nx'_jh_j+u'r,
\end{equation}
where the evolution operator $V:x\in\C\to x'=V(x)\in\C$ is defined as the following
\begin{equation}\label{V}
V:\left\{\begin{array}{ll}
x_j'=u\sum_{i=1}^na_{ij}x_i, \ \ j=1,\dots,n,\\[3mm]
u'=u\sum_{i=1}^nb_ix_i.
\end{array}
\right.
\end{equation}
If we write $x^{[k]}$ for the power $(\dots(x^2)^2\dots)$ ($k$ times) with $x^{[0]}=x$ then the trajectory with initial $x$ is given by $k$ times iteration of the operator $V$, i.e. $V^k(x)=x^{[k]}$. This algebraic interpretation of the trajectory is useful to connect powers of an element of the algebra and with the dynamical system generated by the evolution operator $V$. For example, zeros of $V$, i.e. $V(x)=0$ correspond to absolute nilpotent elements of $\C$ and fixed points of $V$, i.e. $V(x)=x$ correspond to idempotent elements of $\C$.

For $x=\sum_{i=1}^nx_ih_i+ur$ define a functional ${\bf b}$ as
$${\bf b}(x)=\sum_{i=1}^nb_ix_i.$$

The following proposition fully describes the set $\mathcal N$ of absolute nilpotent elements of $\C$ with ground field $K=\R$.

\begin{pro}
We have
$$\mathcal N=\{(x,u)\in \C: u=0\}\cup\left\{\begin{array}{ll}
\{(0,\dots,0,u)\in C: u\ne 0\}, \ \ \mbox{if} \ \ \det({\bf A})\ne 0\\[3mm]
\{(x,u)\in \C: u\ne 0, \ \ {\bf A}x=0, \ \ {\bf b}(x)=0\}\ \ \mbox{if} \ \ \det({\bf A})=0,
\end{array}
\right.
$$
where $x=(x_1,\dots,x_n)$, ${\bf A}=(a_{ij})$.
\end{pro}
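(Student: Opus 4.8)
The plan is to translate the defining condition $V(x)=0$ for an absolute nilpotent element into the explicit system \eqref{V} and then solve it by a case analysis on the parameter $u$. Writing $x=\sum_{i=1}^n x_ih_i+ur$, formula \eqref{V} tells us that $x\in\mathcal N$ precisely when
\begin{equation}\label{absnil}
u\sum_{i=1}^n a_{ij}x_i=0\ \ (j=1,\dots,n),\qquad u\sum_{i=1}^n b_ix_i=0.
\end{equation}
In matrix form, with $x=(x_1,\dots,x_n)$, ${\bf A}=(a_{ij})$ and the functional ${\bf b}$ already defined, this reads $u\,({\bf A}^{T}x)=0$ and $u\,{\bf b}(x)=0$. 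First I would dispose of the trivial case: if $u=0$ then \eqref{absnil} holds automatically for every choice of $x_1,\dots,x_n$, so the entire hyperplane $\{(x,u):u=0\}$ lies in $\mathcal N$. This accounts for the first set in the union and requires no hypothesis on ${\bf A}$.

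The remaining work is the case $u\ne 0$, where I can divide by $u$ and reduce \eqref{absnil} to the homogeneous conditions ${\bf A}x=0$ together with ${\bf b}(x)=0$. (Note that $\sum_i a_{ij}x_i=0$ for all $j$ is the statement ${\bf A}^{T}x=0$, equivalently ${\bf A}x=0$ after transposing the index convention, which matches the stated ${\bf A}x=0$.) Here I split on the rank of ${\bf A}$. If $\det({\bf A})\ne 0$ then ${\bf A}x=0$ forces $x=0$, and the condition ${\bf b}(x)=0$ is then vacuous; hence the only absolute nilpotent elements with $u\ne 0$ are $(0,\dots,0,u)$, giving the second set. If $\det({\bf A})=0$ then ${\bf A}x=0$ has a nontrivial solution space and we must additionally impose ${\bf b}(x)=0$, so the absolute nilpotent elements with $u\ne 0$ are exactly those $(x,u)$ with ${\bf A}x=0$ and ${\bf b}(x)=0$, which is the third set. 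Taking the union of the $u=0$ family with the appropriate $u\ne 0$ family yields the claimed description of $\mathcal N$.

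The only point demanding care, rather than the substitutions themselves, is the bookkeeping of the index convention: one must check that the linear system $\sum_{i=1}^n a_{ij}x_i=0$ indexed over $j$ is correctly identified with ${\bf A}x=0$ as written in the statement, since ${\bf A}=(a_{ij})$ pairs the summation index $i$ with the row rather than the column. I expect this transposition check to be the main (mild) obstacle; once the identification $\sum_i a_{ij}x_i=({\bf A}^{T}x)_j$ is made, the equivalence with ${\bf A}x=0$ is immediate because $\det({\bf A})=\det({\bf A}^{T})$ and the two systems share the same solution space dimension. Everything else is a direct reading of \eqref{V} followed by the rank dichotomy, so no further estimates or constructions are needed.
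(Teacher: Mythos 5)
Your proposal follows exactly the paper's route: the paper simply writes down the system \eqref{V0} (your \eqref{absnil}) and declares that the result ``follows from a simple analysis of this system''; your case split on $u=0$ versus $u\ne 0$ and then on $\det(\mathbf{A})$ is precisely that analysis, spelled out. The one place where your write-up goes wrong is the handling of the transpose. You correctly observe that $\sum_{i=1}^n a_{ij}x_i=0$ for all $j$ is the system $\mathbf{A}^{T}x=0$, but your justification for identifying its solution set with that of $\mathbf{A}x=0$ --- namely that $\det(\mathbf{A})=\det(\mathbf{A}^{T})$ and the two null spaces have the same dimension --- does not establish equality of the two sets: two subspaces of $K^n$ of equal dimension need not coincide, and for a non-symmetric singular $\mathbf{A}$ one has $\ker\mathbf{A}\ne\ker\mathbf{A}^{T}$ in general. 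In the case $\det(\mathbf{A})\ne 0$ this is harmless (both kernels are $\{0\}$), but in the case $\det(\mathbf{A})=0$ the set of absolute nilpotent elements with $u\ne 0$ is $\{(x,u):\mathbf{A}^{T}x=0,\ \mathbf{b}(x)=0\}$, which agrees with the proposition's $\mathbf{A}x=0$ only under the (implicit) row-vector convention $x\mathbf{A}=0$; you should simply state the condition in the transposed form rather than argue a spurious equivalence. This is a defect shared with the paper's own terse statement and does not affect the structure of the argument.
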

\begin{proof} An absolute nilpotent element $(x_1,\dots,x_n,u)$ satisfies
\begin{equation}\label{V0}
\left\{\begin{array}{ll}
u\sum_{i=1}^na_{ij}x_i=0, \ \ j=1,\dots,n,\\[3mm]
u\sum_{i=1}^nb_ix_i=0.
\end{array}\right.
\end{equation}
The proof follows from a simple analysis of this system.
\end{proof}
Now we shall describe idempotent elements of $\C$,  these are solutions to $x^2=x$. Such an element
$x=(x_1,\dots,x_n,u)$ satisfies the following
\begin{equation}\label{Vx}
\left\{\begin{array}{ll}
u\sum_{i=1}^na_{ij}x_i=x_i, \ \ j=1,\dots,n,\\[3mm]
u\sum_{i=1}^nb_ix_i=u.
\end{array}\right.
\end{equation}

{\it Case} $u=0$. If $u=0$ then from (\ref{Vx}) we get $x_i=0$ for all $i=1,\dots, n$. Hence $x=0$ is a unique idempotent
element.

 {\it Case} $u\ne 0$. Consider a matrix
$T_u=(t_{ij})_{i,j=1,\dots,n}$ such that
\[
t_{ij}=
\begin{cases}
ua_{ji} & \mbox{if} \ \  i\ne j, \\
ua_{ii}-1 & \mbox{if} \ \   i=j.
\end{cases}
\]
Then first $n$ equations of the system (\ref{Vx}) can be written as
\begin{equation}\label{v5} T_{u}x=0,\end{equation}
where $x=(x_1,\dots,x_n)$.

Consider now $u\in \R\setminus\{0\}$ as a parameter, then equation
(\ref{v5})  has a unique solution $x=0$ if $\det(T_u)\ne 0$,
which gives $u=0$, i.e. this is a contradiction to the assumption
that $u\ne 0$.

If $\det(T_u)= 0$ then we fix a solution $u=u_*\ne 0$ of the
equation $\det(T_u)= 0$. In this case there are infinitely many
solutions $x^*=(x_1^*,\dots,x_n^*)$ of $T_{u_*}x= 0$.
 Substituting
a solution $x^*$ in the last equation of the system (\ref{Vx}), we get
\begin{equation}\label{v6}
{\bf b}(x^*)=\sum_{i=1}^nb_ix^*_i=1.
\end{equation}
Denote by ${\mathcal Id}(\C)$ the set of idempotent elements of $\C$. Hence we have proved the following
\begin{pro}\label{p1} We have
$${\mathcal Id}(\C)=\{0\}\cup
\{(x_1^*,\dots,x_n^*,u_*): u_*\ne 0, \, T_{u_*}x^*=0, \ \
{\bf b}(x^*)=1,\, \det(T_{u_*})=0\}.
$$
\end{pro}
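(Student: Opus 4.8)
The plan is to identify the idempotents of $\C$ with the fixed points of the evolution operator $V$, since $x^2=x$ is exactly $V(x)=x$, and then to read off the defining equations in coordinates from (\ref{V}). This produces the system (\ref{Vx}): the first $n$ equations $u\sum_{i=1}^n a_{ij}x_i=x_j$ together with the scalar equation $u\sum_{i=1}^n b_ix_i=u$. I would then split the analysis according to whether the $r$-coordinate $u$ vanishes, treating the two regimes $u=0$ and $u\ne 0$ separately.

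First I would dispose of the case $u=0$. Feeding $u=0$ into the first $n$ equations of (\ref{Vx}) forces $x_j=0$ for every $j$, and the last equation is then trivially satisfied; hence $x=0$ is the unique idempotent with $u=0$, and it is indeed idempotent since $0^2=0$. This accounts for the $\{0\}$ term in the claimed description.

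Next I would treat $u\ne 0$. For fixed $u$ the first $n$ equations of (\ref{Vx}) form a homogeneous linear system in $(x_1,\dots,x_n)$, which I would rewrite as $T_u x=0$ with $T_u=uA^{\mathsf{T}}-I$, i.e.\ the matrix whose entries are $t_{ij}=ua_{ji}$ for $i\ne j$ and $t_{ii}=ua_{ii}-1$; this is exactly (\ref{v5}). The key is to regard $u$ as a parameter and to observe that $\det(T_u)$ is a polynomial in $u$. If $\det(T_u)\ne 0$ then $x=0$ is the only solution, and substituting $x=0$ into the last equation of (\ref{Vx}) gives $0=u$, contradicting $u\ne 0$; so no idempotent arises for such $u$. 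Hence a nonzero idempotent can only occur at a root $u_*\ne 0$ of $\det(T_u)=0$, and for such $u_*$ the solution space of $T_{u_*}x^*=0$ is nontrivial. Imposing the remaining scalar equation and cancelling $u_*\ne 0$ yields the normalization ${\bf b}(x^*)=\sum_{i=1}^n b_ix^*_i=1$, which is (\ref{v6}). Collecting the two cases gives precisely the set in the statement.

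I expect the only real subtlety to be the correct separation of the two roles played by $u$: it is simultaneously the spectral-type parameter for which the singularity condition $\det(T_u)=0$ must hold, and the coordinate that must survive the final normalization ${\bf b}(x^*)=1$. Once one commits to treating $u$ as a parameter and to solving the linear system $T_u x=0$ only at the (finitely many) roots of $\det(T_u)$, the remaining steps are routine linear algebra; care is needed only to check that the last equation of (\ref{Vx}) neither discards admissible solutions nor admits spurious ones, which is guaranteed by retaining ${\bf b}(x^*)=1$ as an explicit constraint in the final set.
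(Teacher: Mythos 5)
Your proposal is correct and follows essentially the same route as the paper: both reduce $x^2=x$ to the system (\ref{Vx}), dispose of $u=0$ to get the trivial idempotent, rewrite the remaining equations as $T_u x=0$ with $u$ treated as a parameter, rule out $\det(T_u)\ne 0$ via the contradiction $u=0$, and obtain the normalization ${\bf b}(x^*)=1$ from the last equation. The only difference is cosmetic (you write $T_u=uA^{\mathsf T}-I$ explicitly and correct the evident typo $x_i$ versus $x_j$ in (\ref{Vx})).
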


\section{ The enveloping algebra of an EACP}

For a given algebra $\A$ with ground field $K$, we recall that multiplication by elements of $\A$ on the left or on the right give rise to left and right $K$-linear transformations of $\A$ given by $L_a(x)=ax$ and $R_a(x)=xa$. The {\it enveloping algebra}, denoted by $\mathcal E(\A)$, of a non-associative algebra $\A$ is the subalgebra of the full algebra of $K$-endomorphisms of $\A$ which is generated by the left and right multiplication maps of $\A$. This enveloping algebra is necessarily associative, even though $\A$ may be non-associative. In a sense this makes the enveloping algebra ``the smallest associative algebra containing $\A$".

Since an EACP, $\C$, is a commutative algebra the right and left operators coincide, so we use only $L_a$.

\begin{thm} Let $\C$ be an EACP with a natural basis $\{h_1,\dots,h_n,r\}$ and matrix of structural constants $M={\bf A}\oplus{\bf b}$. If $\det({\bf A})\ne 0$
then $\{L_1,\dots,L_n, L_r\}$\, (where $L_i=L_{h_i}$) spans a linear space, denoted by span$(L,\C)$, which is the set
of all operators of left multiplication. The vector
space span$(L,\C)$ and $\C$ have the same dimension.
\end{thm}
\begin{proof} For $x=\sum_{i=1}^nx_ih_i+ur\in \C$ by linearity of multiplication in $\C$ we can write $L_x$ as the following
$$L_x=\sum_{i=1}^nx_iL_i+uL_r.$$
 If $L_x = L_y$, for $y =\sum_{i=1}^ny_ih_i+vr\in \C$, then
$$\left(\sum_{i=1}^nx_ih_i+ur\right)h_j=\left(\sum_{i=1}^ny_ih_i+vr\right)h_j$$
implies $urh_j=vrh_j$, i.e. $u=v$. From
$$\left(\sum_{i=1}^nx_ih_i+ur\right)r=\left(\sum_{i=1}^ny_ih_i+vr\right)r$$
we get
$$\sum_{j=1}^n\left(\sum_{i=1}^n(x_i-y_i)a_{ij}\right)h_j+\left(\sum_{i=1}^n(x_i-y_i)b_{i}\right)r=0.$$

Hence
$$\sum_{i=1}^n(x_i-y_i)a_{ij}=0,\ \ \sum_{i=1}^n(x_i-y_i)b_{i}=0.$$
By assumption $\det({\bf A})\ne 0$ from the last system we get $x_i=y_i$ for all $i=1,\dots,n$.
Thus $x = y$. This means that $L_x$ is an injection.
So the linear space that is spanned by all operators of left multiplication can
be spanned by the set $\{L_i, i=1,\dots,n, r\}$. This set is a basis for span$(L,\C)$.
\end{proof}

\begin{pro} For any $x\in \C$ and any $i, i_1, i_2,\dots,i_m \in \{1,2,\dots,n\}$ the following hold
\begin{equation}\label{L}
L_{i_m}\circ L_{i_{m-1}}\circ\dots\circ L_{i_1}(x)=\left({1\over 2^{m-1}}\prod_{j=1}^{m-1}b_{i_j}\right)L_{i_m}(x),
\end{equation}
\begin{equation}\label{L1}
L_r\circ L_{i}(x)={1\over 2}\sum_{j=1}^na_{ij}L_j(x),
\end{equation}
\begin{equation}\label{L2}
L_i\circ L_r(x)={{\bf b}(x)\over 2}L_i(r).
\end{equation}
\end{pro}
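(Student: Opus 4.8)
The plan is to verify each of the three identities by direct computation, exploiting the defining relations (\ref{4}) together with the bilinear multiplication formula (\ref{xy}). Since $\C$ is commutative, I need only keep track of left multiplications $L_i = L_{h_i}$ and $L_r$. The key elementary fact driving everything is that $L_i(x) = h_i x$ always lands in the span of $h_1,\dots,h_n,r$ with a coefficient proportional to the $r$-component of $x$; more precisely, from (\ref{4}) one has $h_i x = \tfrac{1}{2}u\bigl(\sum_j a_{ij}h_j + b_i r\bigr)$ when $x = \sum_k x_k h_k + u r$, because $h_i h_k = 0$ kills all the $h$-components of $x$. Thus $L_i(x)$ depends on $x$ only through its $r$-coordinate $u$, and this single observation is what makes the long compositions collapse.

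First I would prove (\ref{L}) by induction on $m$. For the base case $m=1$ the statement is the trivial identity $L_{i_1}(x)=L_{i_1}(x)$. For the inductive step, I compute $L_{i_m}\bigl(L_{i_{m-1}}\circ\cdots\circ L_{i_1}(x)\bigr)$. By the induction hypothesis the inner expression equals $\bigl(\tfrac{1}{2^{m-2}}\prod_{j=1}^{m-2}b_{i_j}\bigr)L_{i_{m-1}}(x)$, so I must understand $L_{i_m}\bigl(L_{i_{m-1}}(x)\bigr)$. Writing $L_{i_{m-1}}(x)=\tfrac{1}{2}u\bigl(\sum_j a_{i_{m-1}\,j}h_j + b_{i_{m-1}}r\bigr)$, its $r$-coordinate is $\tfrac{1}{2}u\,b_{i_{m-1}}$. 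Applying $L_{i_m}$ and using the boxed observation that $L_{i_m}$ sees only the $r$-coordinate of its argument, I get $L_{i_m}\bigl(L_{i_{m-1}}(x)\bigr)=\tfrac{1}{2}b_{i_{m-1}}\,L_{i_m}(x)$, since $L_{i_m}$ is linear and $L_{i_m}$ of something with $r$-coordinate $\tfrac{1}{2}u b_{i_{m-1}}$ produces exactly $\tfrac{1}{2}b_{i_{m-1}}$ times $L_{i_m}$ of something with $r$-coordinate $u$. Folding in the accumulated scalar from the induction hypothesis yields the factor $\tfrac{1}{2^{m-1}}\prod_{j=1}^{m-1}b_{i_j}$, completing the induction.

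For (\ref{L1}) I would compute $L_r\bigl(L_i(x)\bigr)$ directly. As above, $L_i(x)=\tfrac{1}{2}u\bigl(\sum_j a_{ij}h_j + b_i r\bigr)$, and now I apply $L_r$, i.e. multiply on the left by $r$. Using $r h_j = \tfrac{1}{2}\bigl(\sum_k a_{jk}h_k + b_j r\bigr)$ and $r r = 0$, the $r$-term $\tfrac{1}{2}u b_i r$ is annihilated, and what survives is $\tfrac{1}{2}\sum_j(\tfrac{1}{2}u\,a_{ij})\,(r h_j)$. Recognizing that $\tfrac{1}{2}u\,(r h_j)=L_j(x)$ for our fixed $x$ with $r$-coordinate $u$, this rearranges precisely to $\tfrac{1}{2}\sum_{j=1}^n a_{ij}L_j(x)$. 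For (\ref{L2}) I would instead compute $L_i\bigl(L_r(x)\bigr)$: here $L_r(x)=rx=\tfrac{1}{2}u\cdot 0 + \tfrac{1}{2}\sum_i x_i(r h_i)$, whose $r$-coordinate works out to $\tfrac{1}{2}\sum_i x_i b_i = \tfrac{1}{2}\mathbf{b}(x)$; applying $L_i$, which again reads off only the $r$-coordinate, gives $\mathbf{b}(x)\cdot\tfrac{1}{2}\,L_i(r)$ after matching constants, since $L_i(r)=h_i r$.

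The computations are entirely routine; the only point requiring care is the bookkeeping of the factors of $\tfrac{1}{2}$ and the $b_{i_j}$ in the inductive step for (\ref{L}), where one must correctly track that each application of an $L_{i_j}$ extracts the $r$-coordinate of the previous output and contributes one factor $\tfrac{1}{2}b_{i_{j}}$. I expect no genuine obstacle beyond this arithmetic; the structural reason the identities hold — that every $L_i$ factors through the $r$-coordinate of its input — makes the collapse transparent once stated.
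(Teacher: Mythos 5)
Your proposal is correct and follows essentially the same route as the paper: direct computation from the multiplication table, induction on $m$ for (\ref{L}), and the key observation that $L_i(x)=u\,h_ir$ depends only on the $r$-coordinate $u$ of $x$. The one blemish is the intermediate display in your argument for (\ref{L1}), where a spurious outer factor $\tfrac{1}{2}$ is compensated by the incorrect identification $\tfrac{1}{2}u(rh_j)=L_j(x)$ (in fact $L_j(x)=u(rh_j)$); the two slips cancel, and the rest of the computation and the final identities are right.
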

\begin{proof} For $x=\sum_{i=1}^nx_ih_i+ur$ we note that $L_j(x)=uh_jr$, $j=1,\dots,n$.

1) To prove (\ref{L}) we use mathematical induction over $m$. For $m=2$ we have
$$(L_{i_2}\circ L_{i_1})(x)=h_{i_2}(h_{i_1}x)=h_{i_2}(uh_{i_1}r)$$ $$=h_{i_2}{u\over 2}\left(\sum_{i=1}^na_{i_1j}h_j+b_{i_1}r\right)=
(b_{i_1}/2)(uh_{i_2}r)=(b_{i_1}/2)L_{i_2}(x).$$
Assume now that the formula (\ref{L}) is true for $m$, we shall prove it for $m+1$:
$$
L_{i_{m+1}}\circ L_{i_{m}}\circ\dots\circ L_{i_1}(x)=L_{i_{m+1}}\circ\left({1\over 2^{m-1}}\prod_{j=1}^{m-1}b_{i_j}L_{i_m}(x)\right)
$$
$$=\left({1\over 2^{m-1}}\prod_{j=1}^{m-1}b_{i_j}\right)L_{i_{m+1}}\circ L_{i_m}(x)=\left({1\over 2^{m}}\prod_{j=1}^{m}b_{i_j}\right)L_{i_{m+1}}(x).$$

2) Proof of (\ref{L1}):
$$L_r\circ L_i(x)=r(h_ix)=r(uh_ir)={1\over 2}\sum_{j=1}^na_{ij}(urh_j)={1\over 2}\sum_{j=1}^na_{ij}L_j(x).$$

3) Proof of (\ref{L2}):
$$L_i\circ L_r(x)=h_i(rx)=h_i\left(\sum_{j=1}^nx_j(rh_j)\right)$$ $$=
{h_i\over 2}\left(\sum_{m=1}^n\left[\sum_{j=1}^na_{im}x_j\right]h_m+\left(\sum_{j=1}^nx_jb_j\right)r\right)={{\bf b}(x)\over 2}L_i(r).$$
\end{proof}

\section{The centroid of an EACP}

We recall (see \cite{t}) that the centroid $\Gamma(\mathcal A)$ of an algebra $\mathcal A$ is the set of all linear
transformations $T\in \Hom(\mathcal A, \mathcal A)$ that commute with all left and right multiplication
operators
$$TL_x = L_xT, \ \ TR_y = R_yT, \ \ \mbox{for all} \ \ x, y \in \mathcal A.$$

An algebra $\mathcal A$ over a field $K$ is centroidal if $\Gamma(\mathcal A)\cong K$.

\begin{thm} Let $\C$ be an EACP with a natural basis $\{h_1,\dots,h_n,r\}$ and matrix of structural constants $M={\bf A}\oplus{\bf b}$. If $\det({\bf A})\ne 0$ then $\C$ is centroidal.
\end{thm}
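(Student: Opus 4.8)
The plan is to take an arbitrary $T\in\Gamma(\C)$, write its matrix in the natural basis, and show that the commutation conditions force $T$ to be a scalar. Since $\C$ is commutative we have $L_x=R_x$, so $T\in\Gamma(\C)$ iff $TL_x=L_xT$ for every $x$, and by bilinearity it is enough to impose this for $x$ ranging over $\{h_1,\dots,h_n,r\}$. Set $T(h_j)=\sum_i p_{ij}h_i+q_j r$ and $T(r)=\sum_i s_i h_i+\lambda r$, with $P=(p_{ij})$, $\mathbf{s}=(s_i)$, $\mathbf{q}=(q_j)$; the target is $P=\lambda I$, $\mathbf{q}=0$, $\mathbf{s}=0$, which says exactly $T=\lambda\cdot\mathrm{Id}$.

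First I would test $TL_{h_i}=L_{h_i}T$ on each $h_j$. Using $h_ih_j=0$ the left-hand side vanishes, whereas the right-hand side is $\tfrac{q_j}{2}\big(\sum_k a_{ik}h_k+b_i r\big)$. Running $i$ over all indices and invoking $\det(\mathbf{A})\ne0$ (so that no row of $\mathbf{A}$ is zero) forces $q_j=0$ for all $j$; hence $T$ carries each $h_j$ into the span of the $h$'s.

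Next I would read off the $h$-components of the two remaining families of relations. Equating the $h$-parts of $TL_{h_i}(r)=L_{h_i}T(r)$ yields the matrix identity $\mathbf{A}P^{T}+\mathbf{b}\,\mathbf{s}^{T}=\lambda\mathbf{A}$, while equating the $h$-parts of $TL_r(h_j)=L_rT(h_j)$ yields $\mathbf{A}P^{T}+\mathbf{b}\,\mathbf{s}^{T}=P^{T}\mathbf{A}$. Subtracting gives $P^{T}\mathbf{A}=\lambda\mathbf{A}$, and cancelling the invertible $\mathbf{A}$ leaves $P^{T}=\lambda I$, i.e. $P=\lambda I$. Substituting back, the same identity collapses to $\mathbf{b}\,\mathbf{s}^{T}=0$. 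Finally, $TL_r(r)=L_rT(r)$ reduces (because $rr=0$) to $\mathbf{s}^{T}\mathbf{A}=0$; invertibility of $\mathbf{A}$ gives $\mathbf{s}=0$ directly, so $T(r)=\lambda r$ as well. Thus $T=\lambda\cdot\mathrm{Id}$, and since every scalar operator evidently lies in the centroid, $\Gamma(\C)=K\cdot\mathrm{Id}\cong K$, so $\C$ is centroidal.

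The computation is essentially bookkeeping: the only genuine content is tracking the four families of basis relations (those obtained from $L_{h_i}$ and from $L_r$, each tested on an $h_j$ and on $r$) and separating their $h$- and $r$-components. The $r$-components turn out to be automatically consistent, so they impose nothing new. The decisive point, and the place I expect to need care, is that each of the three conclusions $q_j=0$, $P=\lambda I$, and $\mathbf{s}=0$ rests on the hypothesis $\det(\mathbf{A})\ne0$; without it the centroid can be strictly larger than $K\cdot\mathrm{Id}$, so the nondegeneracy of $\mathbf{A}$ must be used at exactly these three cancellations.
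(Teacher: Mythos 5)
Your proof is correct and follows essentially the same route as the paper: write $T$ in the natural basis, impose $TL_x=L_xT$ for $x$ among the basis elements, and use $\det(\mathbf{A})\neq 0$ to force the off-scalar parts to vanish. The only difference is cosmetic bookkeeping --- the paper first extracts $\mathbf{s}=0$ from $TL_r(r)=L_rT(r)$ and then applies Cramer's rule to $\mathbf{A}P^{T}=\lambda\mathbf{A}$, whereas you subtract the two matrix identities to get $P=\lambda I$ first; both orderings close the same linear system.
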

\begin{proof} Let $T\in \Gamma(\C)$. Assume
$$T(h_i)=\sum_{j=1}^nt_{ij}h_j+t_ir,  \ \ T(r)=\sum_{k=1}^n\tau_kh_k+\tau r.$$
We have
$$TL_j(h_i)=T(h_jh_i)=0=L_jT(h_i)=h_j\left(\sum_{k=1}^nt_{ik}h_k+t_ir\right)=t_ih_jr.$$
This gives
\begin{equation}\label{T1}
t_i=0, \ \ \mbox{for all} \ \ i=1,\dots,n.
\end{equation}

Now consider
$$TL_j(r)=T(h_jr)={1\over 2}T\left(\sum_{m=1}^na_{jm}h_m+b_jr\right)={1\over 2} \sum_{m=1}^na_{jm}T(h_m)+{b_j\over 2}T(r)$$
$$={1\over 2}\sum_{k=1}^n\left(\sum_{m=1}^na_{jm}t_{mk}+b_j\tau_k\right)h_k+{1\over 2}\left(\sum_{m=1}^na_{jm}t_{m}+b_j\tau\right)r.$$
In another way we have
$$L_jT(r)=h_j\left(\sum_{k=1}^n\tau_kh_k+\tau r\right)=\tau h_jr={\tau\over 2} \left(\sum_{k=1}^na_{jk}h_k+b_j r\right).$$
According to (\ref{T1}) we should have
\begin{equation}\label{T2}
\sum_{m=1}^na_{jm}t_{mk}+b_j\tau_k=\tau a_{jk}.
\end{equation}
Furthermore,
$$TL_r(h_j)={1\over 2}\sum_{k=1}^n\left[\sum_{m=1}^na_{jm}t_{mk}+b_j\tau_k\right]h_k+{1\over 2}\left[\sum_{m=1}^na_{jm}t_{m}+b_j\tau\right]r$$
and
$$L_rT(h_j)= {1\over 2}\sum_{k=1}^n\left[\sum_{m=1}^na_{mk}t_{jm}\right]h_k+{1\over 2}\left[\sum_{m=1}^nt_{jm}b_m\right]r.$$
These equalities imply
 \begin{equation}\label{T3}\begin{array}{ll}
\sum_{m=1}^na_{jm}t_{mk}+b_j\tau_k=\sum_{m=1}^na_{mk}t_{jm}\\[3mm]
\sum_{m=1}^na_{jm}t_{m}+b_j\tau=\sum_{m=1}^nt_{jm}b_m.
\end{array}
\end{equation}
Finally,
$$TL_r(r)=T(rr)=0=L_rT(r)={1\over 2}\sum_{j=1}^n\left(\sum_{k=1}^na_{kj}\tau_k\right)h_j+{1\over 2}\left(\sum_{k=1}^n\tau_kb_k\right)r.$$
Consequently,
\begin{equation}\label{T4}
\sum_{k=1}^na_{kj}\tau_k=0, \ \ j=1,\dots,n; \ \ \sum_{k=1}^nb_k\tau_k=0.
\end{equation}
Since $\det({\bf A})\ne 0$ from (\ref{T4}) we get
\begin{equation}\label{T5}
\tau_i=0, \ \ \mbox{for all} \ \ i=1,\dots,n.
\end{equation}
Using (\ref{T5}), from (\ref{T2}) we obtain
\begin{equation}\label{T6}
\sum_{m=1}^na_{jm}t_{mk}=\tau a_{jk}, \ \ j,k=1,\dots,n.
\end{equation}
Again using $\det({\bf A})\ne 0$, by the Cramer's rule,  from (\ref{T6}) we get the following solution
\begin{equation}\label{T7}
t_{mk}=\left\{\begin{array}{ll}
0, \ \ \mbox{if} \ \ m\ne k\\[3mm]
\tau, \ \ \mbox{if} \ \ m= k.
\end{array}
\right.
\end{equation}
Note that solutions (\ref{T1}), (\ref{T5}) and (\ref{T7}) satisfy system (\ref{T3}). Hence we obtain
 $$T(h_i)=\tau h_i,  \ \ T(r)=\tau r,$$
 where $\tau$ is a scalar in the ground field $K$. That is $T$ is a scalar multiplication. Consequently,
 $\Gamma(\C)\cong K$ and $\C$ is centroidal.\end{proof}

\section{Classification of 2 and 3-dimensional EACP}

Let $\C$ be a $2$-dimensional EACP and $\{h, r\}$ be a basis of this algebra.

It is evident that if $\dim \C^2 =0$ then $\C$ is an  abelian
algebra, i.e. an algebra with all products equal to zero.

\begin{pro}\label{tt}
Any  2-dimensional, non-trivial EACP $\C$ is isomorphic to
one of the following pairwise non isomorphic algebras:
\begin{itemize}
\item[$\C_1$:]\ \ $rh =hr=h$,  \ \ $h^2=r^2 = 0$,
\item[$\C_2$:]\ \ $rh=hr={1\over 2}(h+r)$, \ \ $h^2=r^2 = 0$.
\end{itemize}
\end{pro}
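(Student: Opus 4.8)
The plan is to reduce everything to the single defining product. For a $2$-dimensional EACP with natural basis $\{h,r\}$ we have $n=1$, so the matrix of structural constants is a single row $(a,b)$ with $a=a_{11}$, $b=b_1$, and the whole multiplication is encoded in $hr=rh=\frac12(ah+br)$ together with $h^2=r^2=0$. First I would expand an arbitrary square: writing $x=x_1h+ur$, bilinearity and $h^2=r^2=0$ give $x^2=x_1u\,(ah+br)$. Two structural facts drop out at once. First, $\C^2=\langle ah+br\rangle$, so non-triviality ($\C^2\ne 0$) is exactly $(a,b)\ne(0,0)$, and then $\dim\C^2=1$. Second, an element is square-zero precisely when $x_1u=0$, so the square-zero locus is $\langle h\rangle\cup\langle r\rangle$; this rigidity is what will pin down admissible bases.

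Next I would locate the isomorphism invariant that organizes the classification, namely the idempotents. Reading off $x^2=(x_1ua)h+(x_1ub)r$, the system $x^2=x$ forces $ua=1$ and $x_1b=1$ for a nonzero solution, so $\C$ has a nonzero idempotent if and only if $ab\ne 0$, and in that case it is the unique element $e=\frac1b h+\frac1a r$. This both separates the two cases and suggests the normal forms.

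In the case $ab=0$, exactly one of $a,b$ vanishes. If $b=0$, $a\ne0$, the product is $hr=\frac a2 h$, and rescaling $R=\frac2a r$ (keeping $H=h$) gives $HR=H$, $H^2=R^2=0$, i.e.\ $\C\cong\C_1$. If $a=0$, $b\ne0$, the product is $hr=\frac b2 r$; here it lands in $\langle r\rangle$ rather than $\langle h\rangle$, so I would instead take $H=r$, $R=\frac2b h$, which again yields $HR=H$ and $\C\cong\C_1$. In the case $ab\ne0$ the square-zero observation is decisive: in any natural basis $\{H,R\}$ both vectors square to zero, hence each is proportional to $h$ or to $r$, and by independence one is a multiple of $h$ and the other of $r$; setting $H=\frac1b h$, $R=\frac1a r$ one checks $H^2=R^2=0$ and $HR=\frac1{ab}hr=\frac12(H+R)$, so $\C\cong\C_2$.

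Finally, to see $\C_1\not\cong\C_2$ I would invoke the idempotent count: $\C_2$ contains the nonzero idempotent $h+r$, whereas $\C_1$ (being the $b=0$ case) has none, and possession of a nonzero idempotent is preserved by every algebra isomorphism. The step needing the most care is $ab\ne0$: the assertion that the only candidate vectors are scalar multiples of $h$ and $r$ rests entirely on the square-zero computation, and I must verify that the \emph{precise} scalars $\frac1b,\frac1a$ reproduce the exact table of $\C_2$ (coefficient $\frac12$ on both $H$ and $R$), not merely an algebra of the same type. The other delicate point is the bookkeeping in the $ab=0$ case, where one must correctly decide which basis vector plays the role of $h$ according to whether the product lies in $\langle h\rangle$ or $\langle r\rangle$.
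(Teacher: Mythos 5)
Your proof is correct and follows essentially the same route as the paper: the same case split on the vanishing of $a$ and $b$ and essentially the same explicit changes of basis (your basis in the $ab\ne 0$ case is the paper's with the roles of $h$ and $r$ swapped, which is harmless since $\C_2$ is symmetric). The only real difference is the invariant separating $\C_1$ from $\C_2$: you use the existence of a nonzero idempotent, while the paper checks $\C_1^2\C_1^2=0$ versus $\C_2^2\C_2^2\ne 0$; both are valid one-line verifications.
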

\begin{proof} For an EACP $\C$ we have
$$hr={1\over 2}(ah+br), \ \ h^2=r^2=0.$$

{\it Case:}  $a\ne 0$, $b=0$. By change of basis $h'=h$ and $r'={2\over a}r$ we get the algebra $\C_1$.

{\it Case:}  $a=0$, $b\ne 0$. Take $h'= r$  and $r'={2\over b} h$ then we get the algebra $\C_1$.

{\it Case:}  $a\ne 0$, $b\ne 0$. The change $h'= {1\over a} r$, and $r'={1\over b} h$ implies the algebra $\C_2$.

Since $\C_1^2\C_1^2=0$ and $\C_2^2\C_2^2\ne 0$, the algebras $\C_1$ and $\C_2$ are not isomorphic.
\end{proof}

We note that the algebra $\C_2$ is known as the sex differentiation algebra \cite{m}.

Let now $\C$ be a $3$-dimensional EACP and $\{h_1, h_2, r\}$ be a basis of this algebra.

\begin{thm}\label{t3t}
Any  3-dimensional  EACP $\C$ with dim$(\C^2)=1$ is isomorphic to
one of the following pairwise non isomorphic algebras:
\begin{itemize}
\item[$\C_1$:]\ \ $h_1r=r$;
\item[$\C_2$:]\ \ $h_1r=h_2$;
\item[$\C_3$:]\ \ $h_1r=h_1+r$.
\end{itemize}
In each algebra we take $rh_i=h_ir$, $i=1,2$ and all omitted products are zero.
\end{thm}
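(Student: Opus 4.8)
The plan is to reduce the classification to a normal form for the structural matrix $M$ and then read off the three cases from simple nilpotency–type invariants. First I would note that $\dim\C^2=1$ is equivalent to $\operatorname{rank}M=1$, since $\C^2$ is spanned by the two vectors $h_ir=\tfrac12(\sum_j a_{ij}h_j+b_ir)$, $i=1,2$. When the rank is $1$ the two rows of $M$ are proportional, say row~$2$ equals $\lambda\cdot$row~$1$; replacing $h_2$ by $h_2-\lambda h_1$ keeps $\{h_1,h_2,r\}$ a natural basis (all hen–hen and $r^2$ products stay zero, and the new $h_ir$ remain combinations of the basis) while forcing $h_2r=0$. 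After this reduction I may assume
\[
h_1r=\tfrac12\,w,\qquad w=\alpha h_1+\beta h_2+\gamma r\neq0,\qquad h_2r=0,
\]
so that $\C^2=Kw$. A one-line computation using $h_ih_j=r^2=0$ gives $w^2=\alpha\gamma\,w$, together with $wr=\tfrac{\alpha}{2}w$, $wh_1=\tfrac{\gamma}{2}w$ and $wh_2=0$; these control the relevant invariants.

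Next I would split into three cases according to whether $\C^2\C^2=0$ and whether $\C^2\C=0$. From the formulas above, $\C^2\C^2\neq0$ exactly when $\alpha\gamma\neq0$, and $\C^2\C=0$ exactly when $\alpha=\gamma=0$. This yields the trichotomy: (i) $\alpha\gamma\neq0$; (ii) exactly one of $\alpha,\gamma$ is nonzero; (iii) $\alpha=\gamma=0$ (so $\beta\neq0$). I claim these give $\C_3$, $\C_1$, $\C_2$ respectively.

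For the existence of the isomorphisms I would exhibit an explicit natural basis in each case. In case (iii) $w=\beta h_2$, and rescaling $h_2':=\tfrac{\beta}{2}h_2$ gives $h_1r=h_2'$, i.e.\ $\C_2$. In case (ii), since $w^2=0$ the vector $w$ may serve as a new rooster; taking $\widetilde r:=w$, rescaling the unique basis vector that acts nontrivially on $\widetilde r$ so that its multiplication eigenvalue becomes $1$, and keeping the annihilated vector as the second hen, produces $h_1'\widetilde r=\widetilde r$, i.e.\ $\C_1$. Here, when $\gamma=0$ (so $w$ lies in the hen space), it is $r$ that must be reinterpreted as a hen, exactly the role-swap used in the $2$-dimensional argument. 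In case (i), because $\gamma\neq0$ the substitution $r\mapsto r+\tfrac{\beta}{\gamma}h_2$ (again a natural-basis change) removes the $h_2$-component of $w$, leaving $h_2$ as an annihilator and $\{h_1,r\}$ a $2$-dimensional EACP with both structural constants nonzero; by Proposition~\ref{tt} (the case $a\neq0$, $b\neq0$, the sex-differentiation algebra) this $2$-dimensional piece normalizes, and a final rescaling identifies the whole algebra with $\C_3$.

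Finally, the pairwise non-isomorphism follows from the same invariants: $\C_3$ is separated from $\C_1,\C_2$ by $\C^2\C^2\neq0$ (there $\C^2=K(h_1+r)$ and $(h_1+r)^2=2(h_1+r)\neq0$), while $\C_2$ is separated from $\C_1$ by $\C^2\C=0$ (in $\C_2$ one has $\C^2=Kh_2\subseteq\mathrm{Ann}(\C)$, whereas in $\C_1$, $\C^2=Kr$ with $h_1r=r\neq0$). The main obstacle is bookkeeping rather than conceptual: at each step one must verify that the chosen new triple genuinely is a natural basis, so that the target is literally an EACP of the form~(\ref{4}), and one must track the harmless factor-of-$2$ rescalings and the hen/rooster role-swap — the same subtleties already present in the $2$-dimensional classification — in order to land exactly on $\C_1$, $\C_2$, $\C_3$ rather than on scaled variants.
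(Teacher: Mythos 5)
Your proof is correct, and it takes a genuinely different route from the paper's. The paper first normalizes $h_1r$ so that all its non-zero coefficients equal $1$, which produces seven subcases (i)--(vii); it then uses $\dim(\C^2)=1$ to write $h_2r=c\,h_1r$ and works through each subcase with an ad hoc change of basis, distinguishing the results by the table of invariants at the end. You instead exploit $\operatorname{rank}M=1$ at the outset to kill $h_2r$ by the natural-basis change $h_2\mapsto h_2-\lambda h_1$ (implicitly after relabelling $h_1\leftrightarrow h_2$ if the first row of $M$ vanishes --- worth saying explicitly), reducing everything to a single vector $w$ with $\C^2=Kw$, and then the identities $w^2=\alpha\gamma\,w$, $wr=\tfrac{\alpha}{2}w$, $wh_1=\tfrac{\gamma}{2}w$ turn the classification into a clean trichotomy governed by the invariants $\C^2\C^2$ and $\C^2\C$. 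This buys a three-case argument in place of seven, makes the non-isomorphism verification automatic (the same invariants that drive the case split separate the normal forms), and in particular explains structurally why cases (i), (ii), (iv)--(vii) of the paper collapse onto only $\C_1$ and $\C_3$. The cost is that each step requires checking that the new triple is again a natural basis and tracking the hen/rooster role swap and the factors of $2$, which you flag and which check out: e.g.\ in case (i) the substitution $r\mapsto r+\tfrac{\beta}{\gamma}h_2$ preserves $r^2=0$ because $h_2r=0$, and the final rescaling $h_1\mapsto\tfrac{2}{\gamma}h_1$, $r\mapsto\tfrac{2}{\alpha}r$ lands exactly on $h_1r=h_1+r$.
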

\begin{proof} For a 3-dimensional EACP $\C$ we have
$$h_1r=rh_1={1\over 2}(ah_1+bh_2+Ar), \ \ h_2r=rh_2={1\over 2}(ch_1+dh_2+Br), \ \   h_1^2=h_2^2=h_1h_2=r^2=0.$$
First we note that non-zero coefficients of $h_1r$ can be taken 1. Indeed, if $abA\ne 0$ then the change of basis $h_1'={2\over A}h_1$,
$h_2'={2b\over aA}h_2$, $r'={2\over a}r$ makes all coefficients of $h_1r$ equal 1. In case some $a, b, A$ is equal 0 then one can choose a suitable change of basis to make non-zero coefficients equal to 1.
Therefore we have three parametric families: $h_2r=rh_2={1\over 2}(ch_1+dh_2+Br)$ with one of the following conditions
$${\rm (i)} \, h_1r=rh_1=r, \ \ {\rm (ii)}\, h_1r=rh_1=h_2,$$
$${\rm (iii)}\, h_1r=rh_1=h_1+r, \ \ {\rm (iv)}\, h_1r=rh_1=h_2+r,$$
$${\rm (v)}\, h_1r=rh_1=h_1+h_2+r, \ \ {\rm (vi)}\, h_1r=rh_1=h_1,$$
$${\rm (vii)}\, h_1r=rh_1=h_1+h_2.$$

If dim$(C^2)=1$ then $h_2r$ is proportional to $h_1r$. From above-mentioned (i)-(vii) it follows the following cases for $h_1r$ and $h_2r$.

{\it Case} (i): In this case $h_1r=r$ and $h_2r=cr$ for some $c\in K$.
If $c=0$ we get the algebra $\C_1$. If $c\ne 0$ then by the change
$$h_1'=h_1, \ \ h'_2=-h_1+{1\over c}h_2, \ \ r'=r$$ we again obtain the algebra $\C_1$.

{\it Case} (ii): In this case $h_1r=h_2$ and $h_2r=ch_2$ for some $c\in K$.
If $c=0$ we get the algebra $\C_2$. If $c\ne 0$ then by the change
$$h_1'={1\over c}r, \ \ h'_2=ch_1-h_2, \ \ r'=h_2$$ we get the algebra $\C_1$.

{\it Case} (iii): In this case we have $h_1r=h_1+r$ and $h_2r=c(h_1+r)$ for some $c\in K$.
If $c=0$ we get the algebra $\C_3$. If $c\ne 0$ then by the change
$$h_1'=h_1, \ \ h'_2={1\over c}h_2-h_1, \ \ r'=r$$ we get the algebra $\C_3$.

{\it Case} (iv): We have $h_1r=h_2+r$ and $h_2r=c(h_2+r)$ for some $c\in K$.
If $c=0$ then by change
$$h_1'=h_1, \ \ h_2'=h_2, \ \ r'=h_2+r$$
we get the algebra $\C_1$. If $c\ne 0$ then by the change
$$h_1'={1\over c}h_2, \ \ h'_2={1\over c}h_2-h_1, \ \ r'={1\over c}r$$ we get the algebra $\C_3$.

{\it Case} (v): We have $h_1r=h_1+h_2+r$ and $h_2r=c(h_1+h_2+r)$ for some $c\in K$.
 If $c\ne -1$ then by the change
$$h_1'={1\over 1+c}(h_1+h_2), \ \ h'_2={1\over 1+c}(-ch_1+h_2), \ \ r'={1\over 1+c}r$$ we get the algebra $\C_3$.
If $c=-1$ then by the change
$$h_1'=h_1, \ \ h_2'=h_1+h_2, \ \ r'=h_1+h_2+r$$
we get the algebra $\C_1$.

{\it Case} (vi): We have $h_1r=h_1$ and $h_2r=ch_1$ for some $c\in K$.
In this case by the change
$$h_1'=r, \ \ h'_2=ch_1-h_2, \ \ r'=h_1$$ we get the algebra $\C_1$.

{\it Case} (vii): In this case $h_1r=h_1+h_2$ and $h_2r=c(h_1+h_2)$ for some $c\in K$.
Taking the change
$$h_1'={r\over 1+c}, \ \ h'_2=h_2-ch_1, \ \ r'=h_1+h_2$$ we get the algebra $\C_1$.

The obtained algebras are pairwise non-isomorphic this may be
checked by comparison of the algebraic properties listed in the
following table.
\begin{center}
\begin{tabular}{|l|c|c|}
\hline   &  $\C_i^2\C_i^2=0$ & Nilpotent\\
\hline $\C_1$ & Yes & No \\
 \hline $\C_2$ & Yes& Yes \\
\hline $\C_3$ & No & No  \\

\hline
\end{tabular}
\end{center}
\end{proof}
\section*{ Acknowledgements}

 The first author was supported by Ministerio
de Ciencia e Innovaci\'on (European FEDER support included), grant
MTM2009-14464-C02-01. The second author thanks the Department of Algebra, University of
Santiago de Compostela, Spain,  for providing financial support of
his many visits to the Department. He was also supported by the Grant No.0251/GF3 of Education and Science Ministry of Republic
of Kazakhstan. We thank B.A. Omirov for his helpful discussions.

{}

\begin{thebibliography}{99}

\bibitem{e3} I.M.H. Etherington, Non-associative algebra and the symbolism of genetics,
Proc. Roy. Soc. Edinburgh 61  (1941) 24--42.

\bibitem{GMR} R.N. Ganikhodzhaev, F.M. Mukhamedov, U.A. Rozikov, Quadratic stochastic operators and processes: results and open problems.  Inf. Dim. Anal. Quant. Prob. Rel. Fields., 14(2) (2011), 279--335.

\bibitem{LR} M. Ladra, U.A. Rozikov,  Evolution algebra of a bisexual population,  J. Algebra. 378 (2013) 153--172.

\bibitem{ly} Y.I. Lyubich, Mathematical structures in population
genetics, Springer-Verlag, Berlin, 1992.

\bibitem{m} M.L. Reed, Algebraic structure of genetic inheritance,  Bull. Amer. Math.
Soc. (N.S.)  34 (2) (1997) 107--130.

\bibitem{t} J.P. Tian, Evolution algebras and their applications,
Lecture Notes in Mathematics, 1921, Springer-Verlag, Berlin, 2008.
\end{thebibliography}
\end{document}